\newcommand{\IC}{{\mathbb C}}
\newcommand{\ID}{{\mathbb D}}
\newtheorem{theorem}{Theorem}[section]
\newtheorem{lemma}[theorem]{Lemma}
\newtheorem{corollary}[theorem]{Corollary}
\newtheorem{definition}[theorem]{Definition}
\theoremstyle{definition}
\newtheorem{remark}[theorem]{Remark}
\numberwithin{equation}{section}
\def\be{\begin{equation}}
	\def\ee{\end{equation}}
\newcounter{alphabet}
\newcounter{tmp}
\newenvironment{Thm}[1][]{\refstepcounter{alphabet}%
	\bigskip%
	\noindent%
	{\bf Theorem \Alph{alphabet}}%
	\ifthenelse{\equal{#1}{}}{}{ (#1)}%
	{\bf .} \itshape}{\vskip 8pt}
\newenvironment{Lem}[1][]{\refstepcounter{alphabet}%
	\bigskip%
	\noindent%
	{\bf Lemma \Alph{alphabet}}%
	{\bf .} \itshape}{\vskip 8pt}
\begin{document}

	\title[Asymptotic value of the multidimensional Bohr radius]
	{Asymptotic value of the multidimensional Bohr radius}

	\author[Vibhuti Arora]{Vibhuti Arora}
	\address{Vibhuti Arora, Department of Mathematics, National Institute of Technology Calicut, Kerala 673 601, India.}
	\email{vibhutiarora1991@gmail.com, vibhuti@nitc.ac.in}
	
	\author[Shankey Kumar]{Shankey Kumar}
	\address{Shankey Kumar, Department of Mathematics, Indian Institute of Technology Madras, Chennai, 600036, India.}
	\email{shankeygarg93@gmail.com}
	
	\author{Saminathan Ponnusamy}
	\address{ Saminathan Ponnusamy, Department of Mathematics, Indian Institute of Technology Madras, Chennai, 600036, India.}
	\address{Lomonosov Moscow State University, Moscow Center of Fundamental and Applied Mathematics, Moscow, Russia.}
	\email{samy@iitm.ac.in}

	\keywords{Bohr phenomena, Holomorphic mappings, Subordination, Banach spaces, Power series, Homogeneous polynomials, univalent, convex functions.}
	\subjclass[2020]{Primary: 32A05, 30A10; Secondary: 30C45, 30C80, 32A17, 32A30}
	
	\begin{abstract}
		This article determines the exact asymptotic value of the Bohr radii and the arithmetic Bohr radii for the holomorphic functions defined on  the unit ball of the $\ell_p^n$ space and having values in the simply connected domain of $\mathbb{C}$. Moreover, we investigate sharp Bohr radius for four distinct categories of holomorphic functions. These functions map the bounded balanced domain $G$ of a complex Banach space $X$ into the following domains: the right half-plane, the slit domain, the punctured unit disk, and the exterior of the closed unit disk.
	\end{abstract}
	
	\maketitle

	\section{Introduction and preliminaries}

The classical theorem of Harald Bohr \cite{Bohr14}, examined a century ago,  states that if the power series $\sum_{k=0}^{\infty}a_kz^k$ converges for $|z|<1$ and is bounded by $1$ in modulus, then the majorant series
$\sum_{k=0}^{\infty}|a_k|\, |z|^k$ is bounded by $1$ for $|z|\leq 1/3$ and the constant $1/3$ is optimal. This result, although looks simple, not only motivates many but also
generates intensive research activities to study analogues of this result in various setting--which we call Bohr's phenomena in modern language. This topic is interesting in its own right from the point of view
of analysis. In fact, the idea of Bohr that relates to power series was revived by many with great interest in the nineties due to the extensions to holomorphic functions of several complex variables and to more abstract settings.
For example in 1997, Boas and Khavinson \cite{BK97} defined $n$-dimensional Bohr radius for the family of holomorphic functions bounded by $1$ on the unit polydisk. This investigation led to Bohr type questions in different settings. In a series of papers, Aizenberg \cite{A00,A2005,A07}, Aizenberg et al. \cite{AAD}, Defant and Frerick \cite{DF}, and
Djakov and Ramanujan \cite{DjaRaman-2000} have established further results on Bohr's phenomena for multidimensional power series.
Several other aspects and generalizations of Bohr's inequality may be obtained from the literature.
In particular,  \cite[Section 6.4]{KM07} on Bohr's type theorems is useful in investigating new inequalities to holomorphic functions of several complex variables and more importantly to
solutions of partial differential equations.

	Let $X$ be a complex Banach space. For a domain $G\subset X$ and a domain $D\subset \mathbb{C}$, let $H(G,D)$ be the set of holomorphic mappings from $G$ into $D$. Every $f\in H(G,D)$ can be expanded, in a neighbourhood of any given $x_0 \in G$, into the series
	\begin{equation}\label{eqf}
		f(x)=\sum_{m=0}^{\infty}\cfrac{1}{m!}\,D^m f(x_0)\big(\underbrace{(x-x_0),(x-x_0),\dots,(x-x_0)}_{m-times}\big),
	\end{equation}
	where $D^mf(x_0)$, $m\in \mathbb{N}$, denotes the $m$-th Fr\'{e}chet derivative of $f$ at $x_0$, which is a bounded symmetric $m$-linear mapping from $\prod_{i=1}^m X$ to $\mathbb{C}$. For convenience, we may write $D^m f(x_0)\big((x-x_0)^m\big)$ instead of
	$$
	D^m f(x_0)\big(\underbrace{(x-x_0),(x-x_0),\dots,(x-x_0)}_{m-times}\big).
	$$
	It is understood that $D^0 f(x_0)\big((x-x_0)^0\big)=f(x_0)$. For additional information on this topic, we recommend to refer the book of Graham and Kohr \cite{GK}.

	
	A domain $G\subset X$ is said to be {\em balanced} if $zG\subset G$ for all $z\in \overline{\mathbb{D}}$, where $\mathbb{D}:=\{z\in \mathbb{C}: |z|<1\}$.
	Given a balanced domain $G$, we denote the higher dimensional Bohr radius by $K^G_X(D)$ which is the largest non-negative number $r$ with the property that the inequality
	\begin{equation*}\label{eqbi}
		\sum_{m=1}^{\infty}\bigg|\cfrac{1}{m!}\,D^m f(0)\big(x^m\big)\bigg|\le d(f(0),\partial D)
	\end{equation*}
	holds for all $x\in rG$ and for all holomorphic functions $f \in H(G,D)$ with the expansion \eqref{eqf} about the origin. Here $d(f(0),\partial D)$ denotes the Euclidean distance between $f(0)$ and the boundary $\partial D$ of the domain $D$. The definition of $K^G_X(D)$ is introduced by Hamada et al. \cite{HHK} (see also \cite{BD21}).  In this setting, Bohr's  remarkable result may be restated as $K^\mathbb{D}_\mathbb{C}(\mathbb{D})=1/3
	$. A relevant application of this outcome in the field of Banach algebras was provided by Dixon in \cite{D95}. After this, Bohr's result gained immense popularity among mathematicians. To see the progress on this topic, readers are encouraged to explore a comprehensive survey articles \cite{Abu-M16}, \cite[Chapter 8]{GarMasRoss-2018}, and the monograph \cite{DGMS19}. Additionally, for the latest developments pertaining to the Bohr phenomenon in the realm of complex variables, the references \cite{BDK04,BB04,Kaypon18,KM07,LLP2020} serve as valuable resources.
	
	The result that $K_{\mathbb{C}^n}^G(\mathbb{D})\ge 1/3$ was proved by Aizenberg \cite{A00} for any balanced domain $G$ in $\mathbb{C}^n$, and in the same paper he gave the optimal result $K^G_{\mathbb{C}^n}(\mathbb{D})= 1/3$ by assuming $G$ is convex. In 2009, Hamada et al. \cite[Corollary 3.2]{HHK} discussed a more general result by proving that $K^G_X(\mathbb{D})=1/3$ for any bounded balanced domain $G$ which is a subset of complex Banach space $X$. Also, by taking a restriction on $f\in H(G,\mathbb{D})$ such that $f(0)=0$ and $G\subset \mathbb{C}^n$, Liu and Ponnusamy \cite{LP21} improved the quantity $K^G_{\mathbb{C}^n}(\mathbb{D})\geq 1/\sqrt{2}$ and obtained a sharp radius $K^G_{\mathbb{C}^n}(\mathbb{D})=1/\sqrt{2}$ if $G$ is convex. Moreover, Bhowmik and Das \cite{BD21} calculated the following quantities:
	\begin{equation}\label{eqconn}
	\inf\{K^G_X(\Omega):\Omega\subset \mathbb{C} \mbox{ is simply connected}\}=3-2\sqrt{2}
	\end{equation}
	and
	\begin{equation}\label{eqconvex}
	\inf\{K^G_X(\Omega):\Omega\subset \mathbb{C} \mbox{ is convex}\}=\frac{1}{3}.
    \end{equation}
	In higher dimensions, there have been numerous significant and intriguing results (see for instance \cite{A00,A07}).
	
	
	To ensure clarity and facilitate understanding, let us fix a set of notations before we proceed any further.
	Let $\mathbb{H}:=\{z\in \mathbb{C}:{\rm Re}\,z>0\}$ be the right half-plane, $
	T:=\{z\in \mathbb{C}:|\arg z|<\pi\}
	$ be the slit region, $\mathbb{D}_0:=\{z\in \mathbb{C}:0<|z|<1\}$ be the punctured unit disk, and $\overline{\mathbb{D}}^c:=\{z\in \mathbb{C}:|z|>1\}$ be the exterior of the closed unit disk $\overline{\mathbb{D}}$. The spherical distance between two complex numbers of the extended complex plane is given by
	\begin{equation}\label{SD}
	\lambda (z,w)=
	\begin{cases}\vspace{0.1cm}
		\displaystyle\frac{|z-w|}{\sqrt{1+|z|^2}\sqrt{1+|w|^2}}& \text{if $z,w\in \mathbb{C}$,}\\\vspace{0.2cm}
		\displaystyle\frac{1}{\sqrt{1+|z|^2}} & \text{if $w=\infty$}.
	\end{cases}
	\end{equation}
	In \cite{AbuAli}, Abu-Muhanna and Ali observed that if $f(z)=\sum_{n=0}^{\infty}a_nz^n \in H(\mathbb{D},\overline{\mathbb{D}}^c)$ then
	\begin{equation}\label{EUD}
		\lambda\bigg(\sum_{n=0}^{\infty}|a_nz^n|, |a_0|\bigg)\le \lambda \left(f(0),\partial \overline{\mathbb{D}}^c\right)
	\end{equation}
	for all $|z|\le 1/3$, where the constant $1/3$ is sharp.
	In 2013, Abu-Muhanna and Ali in \cite{Abu4} obtained $K_\mathbb{C}^\mathbb{D}(\mathbb{H})=1/3$ and $K_\mathbb{C}^\mathbb{D}(T)=(\sqrt{2}-1)/(\sqrt{2}+1)$. Further, Abu-Muhanna et al. \cite{AAN17} determined $K_\mathbb{C}^\mathbb{D}(\mathbb{D}_0)=1/3$.
	 Therefore, it is natural to consider analogues of these results in a general setting.
	
	The following domains are the examples of the balanced domains, which we require in our further discussion:
	$$
	B_{\ell_p^n}:=\{z=(z_1,\dots,z_n)\in\mathbb{C}^n:\,\|z\|_p<1\}, \,\ 1\leq p \leq \infty,
	$$
	where the $p$-norm is given by $\|z\|_p=\left(\sum_{k=1}^{N}|z_k|^p\right)^{1/p}$ (with the obvious modification for $p=\infty$). Let $\mathbb{N}_0=\mathbb{N}\cup\{0\}$,
	$|\alpha|:=\alpha_1 +\cdots +\alpha_n$  and
	$z^\alpha:={z_1}^{\alpha_1}\cdots {z_n}^{\alpha_n}$ for
	$\alpha=(\alpha_1, \dots,\alpha_n)\in \mathbb{N}_0^n$
	(where $0^0$ is interpreted as $1$). 	
	Let $\Omega$ be a simply connected domain in $\mathbb{C}$. Then every $f\in H(B_{{\ell}_p^n},\Omega)$, $1\leq p\leq \infty$, can be expressed as
	$$
	f(z)=c_0+\sum_{|\alpha|\in \mathbb{N}} c_\alpha(f) z^\alpha, \quad \mbox{for }z\in B_{{\ell}_p^n}.
	$$
	It is clear from our previous discussion that if we replace $X$ by $\mathbb{C}^n$ and $G$ by $B_{\ell_p^n}$, then
	$$
	\sum_{|\alpha|= m} c_\alpha(f) z^\alpha = \cfrac{1}{m!}\,D^m f(0)\big(z^m\big), \quad \mbox{for }  z\in B_{{\ell}_p^n}.
	$$
	Unless otherwise stated throughout the article, we assume $p\in [1,\infty]$.
	Now, we prepare to give the following definition of the Bohr radii of the functions in $H(B_{{\ell}_p^n},\Omega)$.
	\begin{definition} For $1\leq p\leq \infty$, the Bohr radius $K_n^p(\Omega)$ is the supremum of all $r\in[0,1)$ such that
		\begin{equation}\label{eqBS}
			\sup_{z\in rB_{\ell_p^n}} \sum_{m=1}^{\infty}\sum_{|\alpha|=m} |c_\alpha(f) z^\alpha| \leq d(f(0),\partial \Omega)
		\end{equation}
		holds for all holomorphic functions $f \in H(B_{{\ell}_p^n},\Omega)$.
	\end{definition}
	\begin{remark}\label{HoPoRa} Let $f\in H(B_{{\ell}_p^n},\Omega)$.
		Consider $r>0$ such that
		$$
		\sum_{|\alpha|=m} |c_\alpha(f) (rz)^\alpha| \leq d(f(0),\partial \Omega)
		$$
		for all $m\in \mathbb{N}$.
		Then it follows easily that
		\begin{align*}
			\sum_{m=1}^{\infty} \sum_{|\alpha|=m} \left|c_\alpha(f) \left(\frac{r}{2}z\right)^\alpha\right| &=  \sum_{m=1}^{\infty} \frac{1}{2^m} \sum_{|\alpha|=m} \Big|c_\alpha(f) \left(rz\right)^\alpha\Big|\\
			&\leq d(f(0),\partial \Omega)\sum_{m=1}^{\infty} \frac{1}{2^m}=d(f(0),\partial \Omega)
		\end{align*}
		showing that $r/2 \leq K_n^p(\Omega)$.
	\end{remark}
	The above multidimensional generalizations of Bohr's theorem have garnered significant interest in recent years.
	The initial formulation of this generalization was presented by Boas and Khavinson \cite{B2000,BK97}. This generalization, in a more general setting, is studied in \cite{AAD}  in the spirit of functional analysis. Later, Defant et al. \cite{DFOOS11} obtained the optimal asymptotic estimate for $K_n^\infty(\mathbb{D})$ by using the fact that the Bohnenblust-Hille inequality is indeed hypercontractive. In the same year, the exact asymptotic value of $K_n^p(\mathbb{D})$ was given by  Defant and Frerick \cite{DF11}. In 2014, Bayart et al. \cite{BPS14} proved that $\lim_{n\to \infty}K_n^\infty(\mathbb{D})\sqrt{n}/(\sqrt{\log n})=1$. In the recent articles \cite{BCGMMS21,KM23} the lower bound of $K_n^p(\mathbb{D})$ was improved over the previously known lower bounds.
	Before we proceed further, define
	\begin{equation}\label{DBSCD}
		\tilde{K}_n^p:=
		\inf\{K_n^p(\Omega):\Omega\subset \mathbb{C} \mbox{ is simply connected}\} .
	\end{equation}
	Recently, in \cite{BD21}, authors estimate that $\lim_{n\to \infty}\tilde{K}_n^\infty\sqrt{n}/(\sqrt{\log n})=1$.
	
	
	The article is organized as follows. Our primary objective of this paper is to obtain the exact asymptotic bounds of the Bohr radius $\tilde{K}_n^p$, and to establish a lower bound of $K_n^p(\Omega)$. Our next objective is to study the arithmetic Bohr radii (definition is given in the next section) for the functions in $H(B_{{\ell}_p^n},\Omega)$, and to establish sharp bounds of $K^G_X(\mathbb{H})$, $K^G_X(\mathbb{D}_0)$, and
	$K^G_X(T)$, where $G$ represents a bounded balanced domain within a Banach space $X$. Additionally, we also determine a specific radius for which the inequality  \eqref{EUD} holds for functions belonging to the class $H(G,\overline{\mathbb{D}}^c)$.
	
	\section{Main Results}

Before stating the main results, we need to introduce some notation and definitions, and then present necessary preliminary tools.

	\subsection{Preliminaries on $m$-homogeneous polynomial} Let $\mathcal{P}(\prescript{m}{}{\ell_p^n})$ denote the space of all $m$-homogeneous polynomials defined on the $\ell_p^n$ space. Each polynomial
	$P\in \mathcal{P}(\prescript{m}{}{\ell_p^n})$ can be written in the form
	$$
	P(z_1,\dots,z_n)=\sum_{\alpha\in\Lambda(m,n)} a_\alpha z^\alpha,
	$$
	where $\Lambda(m,n):=\{\alpha\in\mathbb{N}_0^n:\,|\alpha|=m\}$ and $a_\alpha \in X$. Equivalently, we can express it in another form as
	$$
	P(z_1,\dots,z_n)=\sum_{{\bf j}\in\mathcal{J}(m,n)} c_{\bf j} z_{\bf j},
	$$
	where $\mathcal{J}(m,n):=\{{\bf j}=(j_1,\dots,j_m):\,1\leq j_1\leq\dots \leq j_m\leq n\}$, $z_{\bf j}:=z_{j_1}\cdots z_{j_m}$, and $c_{\bf j}\in X$. Then the elements $(z^\alpha)_{\alpha\in\Lambda(m,n)}$ or $(z_{\bf j})_{{\bf j}\in\mathcal{J}(m,n)}$ are referred to as the monomials. Note that the coefficients $c_{\bf j}$ and $a_\alpha$ are related by the relation  $c_{\bf j}=a_\alpha$ with ${\bf j}=(1,\overset{\alpha_1}{\dots},1,\dots,n,\overset{\alpha_n}{\dots},n)$. More precisely there is a one-to-one correspondence between two index sets $\Lambda(m,n)$ and $\mathcal{J}(m,n)$.
	
	
	The following result due to Bayart et al. \cite[Theorem 3.2]{BDS19} plays a key role in the proof of Theorem \ref{maintheorem}.

	\begin{Thm}\label{imtheorem} {\rm(\cite[Theorem 3.2]{BDS19})}
		Let $m\geq1$ and $P\in \mathcal{P}(\prescript{m}{}{\ell_p^n})$ be of the form $P(z)=\sum_{{\bf j}\in\mathcal{J}(m,n)} c_{\bf j} z_{\bf j}$. Then, for every $u\in \ell_p^n$, we have
		$$
		\sum_{{\bf j}\in \mathcal{J}(m,n)} |c_{\bf j}| \, |u_{\bf j}| \leq C(m,p)  |\mathcal{J}(m-1,n)|^{1-\frac{1}{\min\{p,2\}}} \|u\|_p^m \,\|P\|_{\mathcal{P}(\prescript{m}{}{\ell_p^n})},
		$$
		where
		$$
		C(m,p)\leq \begin{cases}\vspace{0.1cm}
			em e^{(m-1)/p}
			& \text{if $1\leq p \leq 2$,}\\\vspace{0.2cm}
			em 2^{(m-1)/2}
			& \text{if $2\leq p \leq \infty$}.
		\end{cases}
		$$	
	\end{Thm}

\subsection{The notion of subordination connected with $f\in H(B_{{\ell}_p^n},\Omega)$}
In order to proceed to state a couple of main results, we require the following well-known definitions:
	
\begin{enumerate}
	\item[(i)] {\bf Subordination:} Suppose $f$ and $g$ are analytic functions in $\mathbb{D}$. We say that $g$ is {\em subordinate} to $f$, written by $g\prec f$, or  $g(z) \prec f(z)$, if there is an analytic function
	$w:\,\mathbb{D}\rightarrow \mathbb{D}$ with $w(0)=0$ and such that $g=f\circ w$.
	
	\item[(ii)] {\bf Starlike functions:} A domain $D$ in $\IC$ is called starlike with respect to $0$ (or simply starlike) if $tw\in D$ whenever $w\in D$ and $t\in [0,1]$.
A univalent function $f$ is said to be starlike in $\ID$ if it  maps $\mathbb{D}$ onto a domain that starlike and $f(0)=0$. Analytically, this holds if and only if
$f'(0)\neq 0$ and ${\rm Re}\, (zf'(z)/f(z))>0$ for $z\in \ID$.
	
	\item[(iii)] {\bf Convex functions:} A  domain $D$ in $\IC$ is called convex if $(1-t)w_1+tw_2\in D$ whenever $w_1, w_2\in D$ and $t\in [0,1]$.  An analytic function is said to be convex
if it  maps $\mathbb{D}$ univalently onto a convex domain. It is well-known that $f$ is convex if and only if $g=zf'$ is starlike.
\end{enumerate}
The following well-known result of Rogosinski  (see \cite[p. 195, Theorem 6.4]{Duren83}) on subordination  serves as a useful tool to prove Theorem \ref{maintheorem}.

\begin{Lem}\label{lemmasub}
{\rm \cite[Theorem 6.4]{Duren83}}
	Let $f(z)=\sum_{n=1}^{\infty}a_nz^n$ and $g(z)=\sum_{n=1}^{\infty}b_nz^n$ be two analytic functions in $\mathbb{D}$ such that  $f\prec g$. 
Then
	\begin{itemize}
		\item[(i)] $|a_n|\le n|b_1|$ if $g$ is starlike univalent in $\mathbb{D}$,
		\item[(ii)] $|a_n|\le |b_1|$ if $g$ is convex univalent in $\mathbb{D}$.
	\end{itemize}
\end{Lem}
The inequality in Lemma B(i) 
continues to hold even if $g$ is just univalent in $\mathbb{D}$. This fact follows from the work of de Branges \cite{DeB1} in 1985.
Also, we use the following well-known results: if $g$ is the univalent  mapping from $\mathbb{D}$ onto $\Omega$, then (cf. \cite[Corollary 1.4]{Pomm92})
\begin{equation}\label{USEQ}
	|g'(0)|\geq d(g(0),\partial\Omega)\geq \frac{1}{4}	|g'(0)|,
\end{equation}
and similarly, if $g$ is convex also, then
\begin{equation}\label{CSEQ}
	|g'(0)|\geq d(g(0),\partial\Omega)\geq \frac{1}{2}	|g'(0)|.
\end{equation}
	 The proofs of Theorems \ref{maintheorem} and \ref{KA-1} which  used the inequalities \eqref{subord} and \eqref{subord-2} require some discussion. Now, we begin the discussion by considering $f\in H(B_{{\ell}_p^n},\Omega)$. For a fixed $z\in B_{{\ell}_p^n}$, consider the function $F$ on $\mathbb{D}$ defined by
	\begin{equation*}
		F(y):=f(zy),\quad y\in \mathbb{D}.
	\end{equation*}
	Then $F:\mathbb{D}\to \Omega$ is holomorphic and
	\begin{equation*}
		F(y)=f(0)+\sum_{m=1}^{\infty}\Bigg(\sum_{|\alpha|=m} c_\alpha(f) z^\alpha \Bigg) y^m.
	\end{equation*}
	Therefore, $F \prec g$ in $\mathbb{D}$, where
	$g$ is the univalent mapping from $\mathbb{D}$ onto $\Omega$ satisfying $g(0)=F(0)=f(0)$. Then, using Lemma B(i) 
 and the inequality \eqref{USEQ}, we have
	\begin{equation}\label{subord}
		\bigg|\sum_{|\alpha|=m} c_\alpha(f) z^\alpha\bigg|\leq 4md(f(0),\partial \Omega), \quad \mbox{for each $z\in B_{{\ell}_p^n}$}.
	\end{equation}
	If $\Omega$ is a convex domain, then  Lemma B(ii) 
and the inequality \eqref{CSEQ} provide
	\begin{equation}\label{subord-2}
		\bigg|\sum_{|\alpha|=m} c_\alpha(f) z^\alpha\bigg|\leq 2 d(f(0),\partial \Omega), \quad \mbox{for each $z\in B_{{\ell}_p^n}$}.
	\end{equation}

	
	\subsection{Asymptotic behaviour of the Bohr radius}
Regarding the multi-dimensional Bohr radius, significant contributions have been made by a number of authors (see \cite{A00, B2000, BK97, DF11, DT89}). Their results and ideas reach  up to the following
optimal  result for $ K_n^p(\mathbb{D})$.

\begin{Thm}\label{MDBR}
	There exists a constant $D\geq 1$ such that for each $1\leq p \leq \infty$ and all $n$
	$$
	\frac{1}{D} \Bigg(\frac{\log n}{n}\Bigg)^{1-\frac{1}{\min\{p,2\}}}\leq K_n^p(\mathbb{D}) \leq D \Bigg(\frac{\log n}{n}\Bigg)^{1-\frac{1}{\min\{p,2\}}}.
	$$
\end{Thm}

\begin{remark}
	Note that, in Theorem C, 
the lower bound of $K_n^p(\mathbb{D}) $ was established by Defant and Frerick \cite{DF11}, while the upper bound for the case $p=\infty$
	was obtained by Boas and Khavinson \cite{BK97}. For the remaining values of $p$, the upper bound was determined by Boas \cite{B2000}.	
\end{remark}

	Now we are in a position to state and prove our first main result which
	gives bounds of the Bohr radii $\tilde{K}_n^p$ defined by \eqref{DBSCD}. In fact, in the case of simply connected domain, we show that the left hand side inequality  holds
with $B$ in place of $1/D$, where $B_0=1/(8e^{5})\leq B\leq 1  $.
	
	\begin{theorem} \label{maintheorem}
		There are positive constants $B$ and $D$ such that for each $1\leq p \leq \infty$ and all $n$
		$$
		B \Bigg(\frac{\log n}{n}\Bigg)^{1-\frac{1}{\min\{p,2\}}}\leq \tilde{K}_n^p \leq D  \Bigg(\frac{\log n}{n}\Bigg)^{1-\frac{1}{\min\{p,2\}}}.
		$$
	\end{theorem}
	\begin{proof}
		We begin the discussion with the following simple observation from \eqref{DBSCD}:
		$\tilde{K}_n^p\leq K_n^p(\mathbb{D})$.
		Then, from Theorem C, 
we have
		$$
		K_n^p(\mathbb{D})\leq D \Bigg(\cfrac{\log n}{n}  \Bigg)^{1-\frac{1}{\min\{p,2\}}}
		$$
		for some $D\geq 1$. Now, we proceed to obtain a lower estimate.
		To do this, first we fix $P\in \mathcal{P}(\prescript{m}{}{\ell_p^n})$, where  $P(z)=\sum_{{\bf j}\in\mathcal{J}(m,n)} c_{\bf j} z_{\bf j}$,  and $u \in \ell_p^n$. Then, by Theorem A 
and the inequality \eqref{subord}, we obtain
		$$
		\sum_{{\bf j}\in \mathcal{J}(m,n)} |c_{\bf j} (P)|\, |u_{\bf j}| \leq 4 m \, C(m,p)  |\mathcal{J}(m-1,n)|^{1-\frac{1}{\min\{p,2\}}} \|u\|_p^m d(f(0),\partial \Omega).
		$$
		If we use the fact that the number of elements in the index set $\mathcal{J}(m-1,n)$ is
		$$
		\frac{(n+m-2)!}{(n-1)!(m-1)!},
		$$
		then we get
		$$
		\sum_{{\bf j}\in \mathcal{J}(m,n)} |c_{\bf j} (P)| \, |u_{\bf j}| \leq 4m \,C(m,p)  \Bigg(\frac{(n+m-2)!}{(n-1)!(m-1)!}\Bigg)^{1-\frac{1}{\min\{p,2\}}} \|u\|_p^m \ d(f(0),\partial \Omega).
		$$
		It is not difficult to observe that
		$$
		\frac{(n+m-2)!}{(n-1)!(m-1)!} \leq \frac{(n+m)^{m-1}}{(m-1)!} \leq \frac{e^m}{m^{m-1}} (n+m)^{m-1}=e^m \bigg(1+\frac{n}{m} \bigg)^{m-1},
		$$
		which leads to
		$$
		\sum_{{\bf j}\in \mathcal{J}(m,n)} |c_{\bf j} (P)| \, |u_{\bf j}|\leq 4m\, C(m,p)\ \bigg(1+\frac{n}{m} \bigg)^{(m-1)\Big(1-\frac{1}{\min\{p,2\}}\Big)} e^{m\left(1-\frac{1}{\min\{p,2\}}\right)}  \ \|u\|_p^m \ d(f(0),\partial \Omega).
		$$
		A simple observation shows that
		\begin{equation*}
			\bigg(1+\frac{n}{m} \bigg)^{\frac{m-1}{m}} \leq 2^{\frac{m-1}{m}} \max \bigg\{1, \bigg(\frac{n}{m} \bigg)^{\frac{m-1}{m}} \bigg\} \leq 2 \max \bigg\{1, \frac{m^{1/m}n}{m n^{1/m}} \bigg\}.
		\end{equation*}
		Note that $x\mapsto xn^{1/x}$ is decreasing from $x=0$ to $x=\log n$ and increasing thereafter. Thus,
		$$
		\bigg(1+\frac{n}{m} \bigg)^{\frac{m-1}{m}} \leq \frac{2n}{\log n}
		$$
		so that
		$$
		\sum_{{\bf j}\in \mathcal{J}(m,n)} |c_{\bf j} (P)| \, |u_{\bf j}|\leq 4m \,C(m,p)\ \bigg(\frac{2n}{\log n} \bigg)^{m\Big(1-\frac{1}{\min\{p,2\}}\Big)} e^{m-\frac{m}{\min\{p,2\}}}  \ \|u\|_p^m \ d(f(0),\partial \Omega).
		$$
		Finally, using Remark \ref{HoPoRa} we obtain
		$$
		K_n^p(\Omega)\geq B \Bigg(\cfrac{\log n}{n}  \Bigg)^{1-\frac{1}{\min\{p,2\}}}
		$$
		for some $B>0$. Indeed, it can be noted that $B_0=1/(8e^{5})\leq B\leq 1  $.
		This concludes the proof.
	\end{proof}

    \subsection{Asymptotic behaviour of the arithmetic Bohr radius}
	The arithmetic Bohr radius was for the first time introduced and studied by Defant et al. in \cite{DMP08}. The concept of the arithmetic Bohr radius plays an important role in studying the upper bound of the multi-dimensional Bohr radius and is also used as a main tool to derive upper inclusions for domains of convergence  (see \cite{DMP09}).
	\begin{definition}
		The arithmetic Bohr radius $A_n^p(\Omega)$, $1\leq p \leq \infty$, is defined as
		\begin{align*}
			A_n^p(\Omega):= \sup\bigg\{&\frac{1}{n}\sum_{i=1}^{n}r_i, r\in \mathbb{R}_{\geq 0}^n:\sum_{m=1}^{\infty}\sum_{|\alpha|=m} |c_\alpha(f)| r^\alpha \leq d(f(0),\partial \Omega) \\
			&\mbox{ for all } f\in H(B_{{\ell}_p^n},\Omega)\bigg\},
		\end{align*}
		where $\mathbb{R}_{\geq 0}^n=\{r=(r_1,\dots,r_n)\in \mathbb{R}^n:\, r_i\geq 0, 1\leq i \leq n \}$. Also, we set
		$$
		\tilde{A}_n^p=\inf\{A_n^p(\Omega):\Omega\subset \mathbb{C} \mbox{ is simply connected}\}.
		$$
	\end{definition}

	\begin{theorem}\label{LSABR} Let $2\leq p\leq \infty$. Then we have
		$$
		\limsup_{n\to \infty} A_n^p(\mathbb{D}) \frac{\displaystyle n^{ \big(\frac{1}{2}+\frac{1}{\max\{p,2\}}\big)}}{  \sqrt{\log n}}\leq1.
		$$
	\end{theorem}
	\begin{proof}
		To achieve our goal, we use the Kahane-Salem-Zygmund inequality (see \cite{B2000,BK97,DMP08}) which states that for $n,m\geq 2$, there exist coefficients $(c_\alpha)_{|\alpha|=m}$ with $|c_\alpha|=m!/\alpha!$ for all $\alpha$ such that
		$$
		\sup_{z\in B_{\ell_p^n}} \bigg|\sum_{|\alpha|=m} c_\alpha z^\alpha\bigg| \leq \sqrt{32m\log(6m)} n^{ \frac{1}{2}+\big(\frac{1}{2}-\frac{1}{\max\{p,2\}}\big)m} \sqrt{m!}=:L.
		$$
		Let $r=(r_1,\dots,r_n) \in \mathbb{R}_{\geq 0}^n$ such that
		$$
		\sum_{m=1}^{\infty} \sum_{|\alpha|=m} |a_\alpha(f)|r^{\alpha}\leq d(f(0),\partial\mathbb{D}) \quad
		\mbox{for all  $f\in H(B_{{\ell}_p^n},\mathbb{D})$}.
		$$
		Then
		$$
		\frac{1}{L}\bigg(\sum_{i=1}^{n}r_i\bigg)^m=\frac{1}{L}\sum_{|\alpha|=m} \frac{m!}{\alpha!} r^\alpha=\frac{1}{L}\sum_{|\alpha|=m} |c_\alpha| r^\alpha \leq 1.
		$$
		This gives that
		$$
		\bigg(\sum_{i=1}^{n}r_i\bigg)^m\leq \sqrt{32m \log(6m)} n^{ \frac{1}{2}+\big(\frac{1}{2}-\frac{1}{\max\{p,2\}}\big)m} \sqrt{m!}.
		$$
		Consequently,
		$$
		\frac{1}{n}\sum_{i=1}^{n}r_i \leq \big(32mn \log(6m)\big)^{\frac{1}{2m}} \frac{(m!)^{\frac{1}{2m}}}{n^{ \big(\frac{1}{2}+\frac{1}{\max\{p,2\}}\big)}}.
		$$
		Using Stirling's formula $\left(m!\leq \sqrt{2\pi m}e^{\frac{1}{12m}}m^me^{-m}\right)$, one obtains
		\begin{equation*}
			\frac{1}{n}\sum_{i=1}^{n}r_i \leq \big(32mn \log(6m)\big)^{\frac{1}{2m}} \left( \sqrt{2\pi m}e^{\frac{1}{12m}}e^{-m}\right)^{\frac{1}{2m}} \frac{\sqrt{m}}{n^{ \big(\frac{1}{2}+\frac{1}{\max\{p,2\}}\big)}}.  \end{equation*}
		Letting $m=\lfloor \log(n) \rfloor$, we have
		\begin{equation*}
			\left(\frac{\displaystyle n^{ \big(\frac{1}{2}+\frac{1}{\max\{p,2\}}\big)}}{  \sqrt{\log n}}\right)\frac{1}{n}\sum_{i=1}^{n}r_i \leq \left(32  \sqrt{2\pi }\lfloor \log(n) \rfloor^{\frac{3}{2}}e^{\frac{1}{12\lfloor \log(n) \rfloor}}\log(6\lfloor \log(n) \rfloor)\right)^{\frac{1}{2\lfloor \log(n) \rfloor}} \frac{n^{\frac{1}{2\lfloor \log(n) \rfloor}}}{\sqrt{e}}.  \end{equation*}
		It is easy to see that
		$$
		\lim_{n\to \infty} \left(32  \sqrt{2\pi }\lfloor \log(n) \rfloor^{\frac{3}{2}}e^{\frac{1}{12\lfloor \log(n) \rfloor}}\log(6\lfloor \log(n) \rfloor)\right)^{\frac{1}{2\lfloor \log(n) \rfloor}}=1.
		$$
		Further, the observation
		$$
		\frac{1}{2} \leq {\frac{\log n}{2\lfloor \log(n) \rfloor}} \leq {\frac{\lfloor \log n \rfloor+1}{2\lfloor \log(n) \rfloor}}
		$$
		gives that
		$$
		\lim_{n\to \infty} n^{\frac{1}{2\lfloor \log(n) \rfloor}}=\sqrt{e}.
		$$
		Both of the above observations conclude the result.
	\end{proof}
The proof of the next lemma follows exactly as in \cite[Lemma 4.3]{DMP08}. So, we skip the details here.
\begin{lemma}\label{AKlemma}
	For $n\in \mathbb{N}$ and $1\leq p\leq \infty$, we have
	$$
	A_n^p(\Omega)\geq \frac{K_n^p(\Omega)}{n^{1/p}}.
	$$
\end{lemma}
 For our further discussion we need the following result which is obtained in \cite[Theorem 4.1]{DMP08} and \cite[Remark 2]{DF11}.
	
\begin{Thm}\label{ABR}
	There is an absolute constant $D\geq 1$ such that for each $1\leq p\leq \infty$ and
	each $n$
	$$
	\frac{1}{D} \frac{  \big( \log n\big)^{1-(1/\min\{p,2\})}}{\displaystyle n^{\frac{1}{2}+(1/\max\{p,2\})}}\leq A_n^p(\mathbb{D}) \leq D  \frac{  \big( \log n\big)^{1-(1/\min\{p,2\})}}{\displaystyle n^{\frac{1}{2}+(1/\max\{p,2\})}}.
	$$
\end{Thm}
The lower bound in the following result can simply be obtained using Theorem \ref{maintheorem} and Lemma \ref{AKlemma}. The upper bound follows from Theorem D 
and the observation that
$$
\tilde{A}_n^p\leq A_n^p(\mathbb{D}).
$$

\begin{theorem}\label{ABRS}
	There are positive constants $B$ and $D$ such that for each $1\leq p \leq \infty$ and all $n$
	$$
	B \frac{  \big( \log n\big)^{1-(1/\min\{p,2\})}}{\displaystyle n^{\frac{1}{2}+(1/\max\{p,2\})}}\leq \tilde{A}_n^p \leq D  \frac{  \big( \log n\big)^{1-(1/\min\{p,2\})}}{\displaystyle n^{\frac{1}{2}+(1/\max\{p,2\})}}.
	$$
\end{theorem}

\begin{remark}
	For instance $B$ and $D$ in Theorem \ref{ABRS}
can be taken from Theorems \ref{maintheorem} and D,
 respectively.
\end{remark}
The next result gives the asymptotic behaviour of $\tilde{A}_n^\infty$.
\begin{theorem}
	$\lim\limits_{n\to \infty} \tilde{A}_n^\infty \sqrt{n/\log(n)}=1$.
\end{theorem}
\begin{proof}
	It is already obtained in \cite[Theorem 2]{BD21} that for every $\epsilon \in (0,1/2)$
	$$
	K_n^\infty(\Omega)\geq (1-2 \epsilon) \sqrt{\frac{\log n}{n}},
	$$
	for large enough $n$. By Lemma \ref{AKlemma}, for every $\epsilon \in (0,1/2)$, we have
	$$
	A_n^\infty(\Omega)\geq (1-2 \epsilon) \sqrt{\frac{\log n}{n}}
	$$
	for large enough $n$. The upper estimation can be obtained using Theorem \ref{LSABR}  and the fact that $
	\tilde{A}_n^\infty\leq A_n^\infty(\mathbb{D})$. This completes the proof.
	\end{proof}
	
	\subsection{Bounds on $K_{n}^p(\Omega)$ and $A_{n}^p(\Omega)$ through Sidon constant}
	We denote $\partial B_{\ell_p^n}=\{z\in \mathbb{C}^n:\|z\|_p =1\}$. Also, we define the norms $\|.\|_\infty$ and $\|.\|_1$ on the elements of the space $\mathcal{P}(\prescript{m}{}{\ell_p^n})$ as
	$$
	\|P\|_\infty:= \sup_{z\in \partial B_{\ell_p^n}}|P(z)| \
	\mbox{ and } \
	\|P\|_1:=\sup_{z\in \partial B_{\ell_p^n}} \sum_{|\alpha|=m} |c_\alpha|\,|z^\alpha|,
	$$
	respectively.
	It is not hard to observe that $\|P\|_\infty \leq \|P\|_1$.
	
	
	\begin{definition}\label{sidon}
		For each pair $n,m\in \mathbb{N}$ and each $1\leq p\leq \infty$, we define the constant $S_p(m,n)$ as
		\begin{align*}
			S_p(m,n)=\inf\{M>0:\|P\|_1\leq M\|P\|_\infty \text{ for all } P\in \mathcal{P}(\prescript{m}{}{\ell_p^n})\}.
		\end{align*}
		For the case $p=\infty$, the constant $S_p(m,n)$ known as the Sidon constant denoted by $S(m,n)$.
	\end{definition}
From the above definition, it is clear that  $S_p(m,n)\geq 1$ for every $m,n$ and $p$. Moreover, if $P(z)=\sum_{j=1}^{n}c_j z_j$, then choose $t_{z_j}\in [0,2\pi]$ such that $|c_j z_j|=c_j z_j e^{it_{z_j}}$ for $j=1,\dots,n$. We obtain
\begin{equation*}
	\sum_{j=1}^{n}|c_j z_j|= \sum_{j=1}^{n}c_j z_j e^{it_{z_j}}
	\leq \|P\|_\infty
\end{equation*}
which shows that $S_p(1,n)= 1$ for every $n$ and $p$.	
	
	
	\begin{theorem}\label{KA-1}
		Let $n\in\mathbb{N}\backslash \{1\}$ and $1\leq p\leq \infty$. If $\Omega$ is a simply connected domain and  $H_{n}^p \in(0,1)$ is the solution of the following equation
		\begin{align} \label{eqsd}
			x+\sum_{m=2}^{\infty}m S_p(m,n) x^m= \frac{1}{4},
		\end{align}		
		then
		\begin{equation}\label{lbmbr}
		K_{n}^p(\Omega) \geq H_{n}^p \quad \mbox{and}  \quad  A_{n}^p(\Omega)\geq \frac{H_{n}^p}{n^{1/p}}.
		\end{equation}
		If $\Omega$ is convex, then the number $H_{n}^p$  in \eqref{lbmbr} can be replaced by the number which is the solution of the following equation
		\begin{align*}
			x+\sum_{m=2}^{\infty} S_p(m,n) x^m= \frac{1}{2}.
		\end{align*}		
	\end{theorem}
	\begin{proof}
		Assume that $\Omega$ is simply connected domain. Now, using Definition \ref{sidon} and inequality \eqref{subord}, we have
		$$
		\sum_{|\alpha|=m} |c_\alpha(f) (rz)^\alpha| \leq 4mS_p(m,n)r^md(f(0),\partial \Omega)
		$$
		and thus,
		$$
		\sum_{m=1}^{\infty}\sum_{|\alpha|=m} |c_\alpha(f) (rz)^\alpha| \leq 4d(f(0),\partial \Omega)\sum_{m=1}^{\infty}mS_p(m,n)r^m.
		$$
		We have already observed that $S_p(1,n)=1$. Then the equation \eqref{eqsd} shows that
		$$
		\sum_{m=1}^{\infty}\sum_{|\alpha|=m} |c_\alpha(f) (rz)^\alpha| \leq d(f(0),\partial \Omega) \quad \mbox{for every $r\leq H_{n}^p$}.
		$$
		It is easy to observe that $f(x):=x+\sum_{m=2}^{\infty}m S_p(m,n) x^m$ is an increasing function of $x$ for $x\geq 0$, $f(0)=0<1/4$ and $f(1/2)>1/4$. This concludes that $ H_{n}^p \in (0,1)$.
		
		 Using the above approach and the inequality \eqref{subord-2} we can obtain the desired result when $\Omega$ is convex. Finally, the lower bound of $A_{n}^p(\Omega)$ can be obtained easily using Lemma \ref{AKlemma} and the lower bound of $K_{n}^p(\Omega)$. This completes the proof.
	\end{proof}
	
	
	\subsection{Multidimensional Bohr's radius for Half-plane and Convex domains}
	Let $\mathcal{P}$ denote the class of all analytic functions $p$ with $p(0)=1$ having positive real part in $\mathbb{D}$.
	The following fundamental result is well-known (cf. \cite[Corollary 2.3]{Pomm75})  and the Carath\'{e}odory Lemma  (cf. \cite[p. 41]{Duren83}).
	
	\begin{Lem} \label{lemmap}
		For every $p\in \mathcal{P}$ of the form $p(z)=1+c_1z+c_2z^2+\cdots$, we have $|c_n|\le 2$ for $n\ge 1$.
	\end{Lem}
	
	
	We are now ready to state and prove our initial result concerning the multidimensional Bohr's phenomenon for functions belonging to the class $H(G,\mathbb{H})$.
	
	
	\begin{theorem}\label{ThmH}
		Let $G$ be a bounded balanced domain and $f\in H(G,\mathbb{H})$ be of the form \eqref{eqf} in a neighbourhood of the origin. Then $K^G_X(\mathbb{H})=1/3$.
	\end{theorem}
	
	\begin{proof}
		 Using \eqref{eqconvex}, we can easily establish that $K^G_X(\mathbb{H})\geq 1/3$. However, we provide an alternative proof for the lower bound of $K^G_X(\mathbb{H})$ here. Note that ${\rm Re}f(0)>0$, for $f\in H(G,\mathbb{H})$. For any fixed $y\in G$, we introduce $F$ by
		\begin{equation*}
			F(z):=f(zy),\quad z\in \mathbb{D}.
		\end{equation*}
		Then $F:\mathbb{D}\to \mathbb{H}$ is holomorphic, $F(0)=f(0)$ and
		\begin{equation}\label{eqHF}
			F(z)=f(0)+\sum_{m=1}^{\infty}\cfrac{1}{m!}\,D^m f(0)\big(y^m\big)z^m.
		\end{equation}
		Define $g\in \mathcal{P}$ by
		\begin{equation*}
			g(z)=\cfrac{F(z)-i{\rm Im}(f(0))}{{\rm Re}(f(0))}=1+\cfrac{1}{{\rm Re}(f(0))}\sum_{m=1}^{\infty}\cfrac{1}{m!}\,D^m f(0)\big(y^m\big)z^m.
		\end{equation*}
		By Lemma~E, 
we obtain
		\begin{equation*}
			\bigg|	\cfrac{\,D^m f(0)\big(y^m\big)}{m!}\bigg| \le 2 {\rm Re}(f(0))
		\end{equation*}
		for $m\ge 1$. Thus, for $x\in (1/3)G$, we have by the last inequality that
		\begin{equation*}
			\sum_{m=1}^{\infty}\bigg|	\cfrac{\,D^m f(0)\big(x^m\big)}{m!}\bigg|=\sum_{m=1}^{\infty}\bigg|	\cfrac{\,D^m f(0)\big(y^m\big)}{m!}\bigg| \bigg(\frac{1}{3}\bigg)^m\le
 {\rm Re}(f(0)) =d(f(0),\partial \mathbb{H}).
		\end{equation*}
		
		In order to prove that the constant $1/3$ is optimal, we use the technique given in \cite{HHK}. For each $1/3<r_0<1$, there exists a $c\in(0,1)$ and $v\in \partial G$ such that $cr_0>1/3$ and $c\sup_{x\in \partial G}||x||<||v||$. Next, we consider the function $f_0$ on $G$ by
		$$
		f_0(x)=L\bigg(\cfrac{c\psi_v(x)}{||v||}\bigg),
		$$
		where $L(z)=(1+z)/(1-z),\mbox{ for }z\in \mathbb{D}$, $\psi_v$ is a bounded linear functional on $X$ with $\psi_v(v)=||v||$ and $||\psi_v||=1$. Clearly, $c\psi_v(x)/||v||\in\mathbb{D}$ and $f_0\in H(G,\mathbb{H})$. Choosing $x=r_0v$ gives
		\begin{equation*}
			f_0(r_0v)=\cfrac{1+cr_0}{1-cr_0}=1+2\sum_{n=1}^{\infty}{(cr_0)}^n=f_0(0)+\sum_{m=1}^{\infty}\bigg|	\cfrac{\,D^m f_0(0)\big(x^m\big)}{m!}\bigg| >2=1+d(f_0(0),\partial\mathbb{H}).
		\end{equation*}
		The proof of the theorem is complete.
	\end{proof}
	
	
	It is now appropriate to remark that when $X=\mathbb{C}$ and $G=\mathbb{D}$, Theorem \ref{ThmH} coincides with \cite[Theorem 2.1]{Abu4}.
	Furthermore, a subsequent result can be derived from Theorem \ref{ThmH}, which pertains to holomorphic functions defined on a bounded balanced domain and taking values in a convex domain.
	
	\begin{corollary}
		Let $G$ be a bounded balanced domain and $f\in H(G,C)$, where $C$ is a convex domain, be of the form \eqref{eqf} in a neighbourhood of the origin. Then $K^G_X(C)=1/3$.
	\end{corollary}
	
	\begin{proof} This corollary has been proved by Bhowmik and Das \cite[Theorem 1]{BD21}. It might be appropriate to indicate an alternate proof as a consequence of the previous theorem.
		Let $u\in \partial C$ be nearest to $f(0)$. Further, let $T_u$ be the tangent line at $u$, and $H_u$ the half-plane containing $C$. Then $f\in H(G,H_u)$. Now choose $t$ real so that $(H_u-u)e^{it}=\mathbb{H}$, the right half-plane and let $K=(C-u)e^{it}\subset\mathbb{H}$. Hence
		$$
		g(z)=(f(z)-u)e^{it}\in H(G,K)\subset H(G, \mathbb{H})
		$$
		and
		$$
		g(0)=(f(0)-u)e^{it}=|f(0)-u|=d(f(0),\partial C).
		$$
		Applying Theorem \ref{ThmH} to $g$ provides  the desired conclusion, including sharpness part.
	\end{proof}
	
	\subsection{Multidimensional Bohr's radius for the punctured unit disk $\mathbb{D}_0$}
	Our next objective is to give multidimensional analogue of \cite[Theorem 2.1]{AAN17} for the class $H(G,\mathbb{D}_0)$.
	
	
	\begin{theorem}
		Let $G$ be a bounded balanced domain and $f\in H(G,\mathbb{D}_0)$ be of the form \eqref{eqf} in a neighbourhood of the origin. Then $K^G_X(\mathbb{D}_0)=1/3$.
	\end{theorem}

	\begin{proof}
		We have $H(G,\mathbb{D}_0)\subset H(G,\mathbb{D})$ and thus, the inequality $K^G_X(\mathbb{D}_0)\geq1/3$ directly follows from \cite[Corollary 3.2]{HHK}. To prove $K^G_X(\mathbb{D}_0)\leq1/3$, as before, consider for any $r_0\in (1/3,1)$ a number $0<c<1$ and $v\in \partial G$ such that $cr_0>1/3$ and $c\sup_{x\in \partial G}||x||<||v||$. Next, we introduce
		\begin{equation*}
			H_t(z)=\exp \bigg(-t \cfrac{1+z}{1-z}\bigg)=\cfrac{1}{e^t}+\cfrac{1}{e^t}\sum_{n=1}^{\infty}\bigg[\sum_{m=1}^{n}\cfrac{(-2t)^m}{m!}{n-1 \choose m-1}\bigg]z^n,\quad t>0,
		\end{equation*}
		which is in $H(\mathbb{D},\mathbb{D}_0)$. Also, let $f_1\in H(G,\mathbb{D}_0)$ be defined by
		$$
		f_1(x)=H_t\bigg(\cfrac{c\psi_v(x)}{||v||}\bigg),
		$$
		where $\psi_v$ is a bounded linear functional on $X$ with $\psi_v(v)=||v||$ and $||\psi_v||=1$. Thus, for $x=r_0v$, we get
		$$
		f_1(r_0v)=H_t(cr_0)=\sum_{m=1}^{\infty}\bigg|	\cfrac{\,D^m f_1(0)\big(x^m\big)}{m!}\bigg|>1
		$$
		by using the argument given in \cite[Theorem 2.1]{AAN17}. This concludes the proof.
	\end{proof}
	
	\subsection{Multidimensional Bohr's radius for slit mapping $T$}
	We now state and prove our next result which provides the sharp Bohr inequality for functions belonging to the class $H(G,T)$.
	
	
	\begin{theorem}
		Let $G$ be a bounded balanced domain and $f\in H(G,T)$ be of the form \eqref{eqf} in a neighbourhood of the origin with $f(0)>0$. Then $K^G_X(T)=3-2\sqrt{2}$.
	\end{theorem}

	\begin{proof} By applying \eqref{eqconn}, we can readily show that $K^G_X(T)\geq 3-2\sqrt{2}$. Nevertheless, we present an alternative proof for the lower bound of $K^G_X(T)$.
		Suppose that $f\in H(G,T)$. Now, we consider the function $F$ as in the proof of Theorem \ref{ThmH} which is in $H(\mathbb{D},T)$ having series expansion \eqref{eqHF}. Since $F\in H(\mathbb{D},T)$, we may write the given condition as
		$
		F\prec g
		$,
		where
		$$
		g(z)=f(0)\bigg(\cfrac{1+z}{1-z}\bigg)^2=f(0)+4f(0)\sum_{n=1}^{\infty}nz^n.
		$$
		According to Lemma B(i), 
we have
		\begin{equation*}
			\bigg|	\cfrac{\,D^m f(0)\big(y^m\big)}{m!}\bigg| \le 4f(0)m ~\mbox{ for $m\ge 1$}.
		\end{equation*}
		This gives, for $z\in \mathbb{D}$ and $y\in G$, that
		\begin{equation*}
			\sum_{m=1}^{\infty}\bigg|	\cfrac{\,D^m f(0)\big(y^m\big)}{m!}\bigg| |z|^m\le  4f(0)\sum_{m=1}^{\infty}m |z|^m=4f(0)\cfrac{|z|}{(1-|z|)^2}
		\end{equation*}
		which is less than or equal to $f(0)$ for all $|z|\le 3-2\sqrt{2}\approx0.17157$. The inequality $K^G_X(T)\geq3-2\sqrt{2}$  holds.

		Next we prove that $K^G_X(T)\leq3-2\sqrt{2}$. Note that, for any $r_0\in (3-2\sqrt{2},1)$, there exists a $c\in (0,1)$ such that $cr_0>3-2\sqrt{2}$. Also there exists a $v\in \partial G$ such that
		$$
		c\sup \{||x||:x\in \partial G\}<||v||.
		$$
		Next, we consider a function $f_2$ on $G$ defined by
		$$
		f_2(x)=U\bigg(\cfrac{c\psi_v(x)}{||v||}\bigg),
		$$
		where
		$$
		U(z)=\bigg(\cfrac{1+z}{1-z}\bigg)^2,\quad z\in \mathbb{D}
		$$
		and, $\psi_v$ is a bounded linear functional on $X$ with $\psi_v(v)=||v||$ and $||\psi_v||=1$.
		Clearly $f_2\in H(G,T)$. Then, for $x=r_0v$, we have
		\begin{equation*}
			f_2(x)	=f_2(0)+\sum_{m=1}^{\infty}\bigg|	\cfrac{\,D^m f_2(0)\big(y^m\big)}{m!}\bigg| |z|^m=\bigg(\cfrac{1+cr_0}{1-cr_0}\bigg)^2=1+4\sum_{n=1}^{\infty}n(cr_0)^n>2.
		\end{equation*}
		This completes the proof.
	\end{proof}
	
	\subsection{Multidimensional Bohr's radius for exterior of the closed unit disk $\overline{\mathbb{D}}^c$}
 The following lemma by Abu-Muhanna and Ali \cite{AbuAli} will be required to establish our next result.
	
	\begin{Lem}\label{lemma2.7}
		Let $f\in H(\mathbb{D},\overline{\mathbb{D}}^c)$. Then $f\prec W$, where
		\begin{equation*}
			W(z)=\exp\bigg(\cfrac{1+\phi(z)}{1-\phi(z)}\bigg),
		\end{equation*}
		with
		\begin{equation*}
			\phi(z)=\cfrac{z+b}{1+\overline{z}b} \ \mbox{ and } \ b=\cfrac{\log f(0)-1}{\log f(0)+1}.
		\end{equation*}
	\end{Lem}
	
	The following theorem extends a result of Abu-Muhanna and Ali  \cite[Theorem 2.1]{AbuAli} to  higher dimension.
	\begin{theorem}
		Let $G$ be a bounded balanced domain and $f\in H(G,\overline{\mathbb{D}}^c)$ be of the form \eqref{eqf} in a neighbourhood of the origin. Then
		\begin{equation}\label{eqDc}
			\lambda \bigg(\sum_{m=1}^{\infty}\bigg|	\cfrac{\,D^m f(0)\big(x^m\big)}{m!}\bigg|, |f(0)|\bigg)\le \lambda(f(0),\partial \overline{\mathbb{D}}^c) \quad \mbox{for } x\in (1/3)G,
		\end{equation}
		where $\lambda$ is defined in \eqref{SD}. This result is sharp.
	\end{theorem}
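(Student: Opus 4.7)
The plan is to copy the slicing strategy used in the proofs of Theorem~\ref{ThmH} and the slit--domain theorem, thereby reducing the Banach--space inequality to the one--dimensional estimate \eqref{EUD} of Abu--Muhanna and Ali. Fix $y\in G$ and set $F(z):=f(zy)$ for $z\in\mathbb{D}$. Because $G$ is balanced, $zy\in G$ for every $z\in\overline{\mathbb D}$, and so $F\in H(\mathbb{D},\overline{\mathbb D}^c)$. Expanding $F$ about the origin exactly as in \eqref{eqHF} gives a one--variable power series
\begin{equation*}
F(z)=\sum_{n=0}^{\infty}a_n z^n,\qquad a_0=f(0),\quad a_n=\frac{1}{n!}D^n f(0)(y^n),
\end{equation*}
to which the scalar inequality \eqref{EUD} applies; thus for $|z|\le 1/3$ one has
\begin{equation*}
\lambda\bigg(\sum_{n=0}^{\infty}|a_n||z|^n,\;|a_0|\bigg)\le \lambda\bigl(F(0),\partial\overline{\mathbb D}^c\bigr)=\lambda\bigl(f(0),\partial\overline{\mathbb D}^c\bigr).
\end{equation*}

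Next I would convert this scalar bound into the multidimensional statement. Given any $x\in (1/3)G$, write $x=(1/3)y$ with $y\in G$ and take $z=1/3$ in the reduction above. By the $n$--linearity of $D^n f(0)$,
\begin{equation*}
a_n z^n=\frac{1}{n!}D^n f(0)(y^n)\Bigl(\tfrac{1}{3}\Bigr)^n=\frac{1}{n!}D^n f(0)(x^n),
\end{equation*}
so that $\sum_{n=0}^{\infty}|a_n||z|^n = |f(0)|+\sum_{m=1}^{\infty}\bigl|\tfrac{1}{m!}D^m f(0)(x^m)\bigr|$, which is precisely the quantity appearing in the first argument of $\lambda$ in \eqref{eqDc}. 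Substituting into the scalar inequality above yields \eqref{eqDc}.

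For sharpness I would mimic the construction in the proofs of Theorems~\ref{ThmH} and the slit--domain theorem. Given $r_0\in(1/3,1)$, pick $c\in(0,1)$ and $v\in\partial G$ with $cr_0>1/3$ and $c\sup_{x\in\partial G}\|x\|<\|v\|$, together with a norm--one linear functional $\psi_v$ satisfying $\psi_v(v)=\|v\|$. Let $W_0(z)=\exp\bigl((1+z)/(1-z)\bigr)$, which is the natural extremal for \eqref{EUD} at the real base point $W_0(0)=e$, and set
\begin{equation*}
f_3(x)=W_0\bigg(\frac{c\,\psi_v(x)}{\|v\|}\bigg),\qquad x\in G.
\end{equation*}
Then $f_3\in H(G,\overline{\mathbb D}^c)$, and at $x=r_0 v$ the image is $W_0(cr_0)$; because $cr_0>1/3$ and the one--dimensional sharpness in \cite{AbuAli} says $W_0$ already fails \eqref{EUD} outside $|z|\le 1/3$, the multidimensional inequality \eqref{eqDc} must fail at $x=r_0 v$. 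Consequently $1/3$ cannot be enlarged.

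The main obstacle I anticipate is not the slicing itself, which is mechanical, but the sharpness: the function $\lambda(\cdot,|f(0)|)$ is not monotone on all of $[0,\infty)$ the way the Euclidean distance is, so one must verify that the extremal $W_0$ indeed violates the spherical inequality, i.e.\ that
$\lambda\bigl(\sum_{n=0}^{\infty}|a_n|(cr_0)^n,\,|a_0|\bigr)>\lambda\bigl(W_0(0),\partial\overline{\mathbb D}^c\bigr)$. This will require invoking (or reproving) the sharpness argument of Abu--Muhanna and Ali for the scalar case and tracking it through the $c\psi_v(\cdot)/\|v\|$ substitution; I expect this to reduce to the same strict inequality already established in \cite{AbuAli}.
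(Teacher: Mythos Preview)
Your reduction to the one--variable inequality \eqref{EUD} via slicing is correct and in fact slightly more economical than the paper's proof, which reproves the scalar estimate inline by invoking Lemma~\ref{lemma2.7} and the bound $\sum_{m\ge 0}|a_m||z|^m\le |f(0)|^{(1+|z|)/(1-|z|)}$ from \cite{AbuAli}. Both routes yield the forward inequality on $(1/3)G$; yours simply black--boxes the scalar result.

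The sharpness argument, however, has a genuine gap, and it is exactly the obstacle you flagged. The single extremal $W_0(z)=\exp\bigl((1+z)/(1-z)\bigr)$ with $W_0(0)=e$ does \emph{not} violate the spherical inequality for any $r\in(1/3,1)$. Since all Taylor coefficients of $W_0$ are positive, the majorant sum equals $W_0(r)=e^{(1+r)/(1-r)}$, and one checks that for $s\ge e$ the map $s\mapsto\lambda(s,e)$ is increasing with supremum $\lambda(\infty,e)=1/\sqrt{1+e^2}$. But
\[
\lambda(e,1)=\frac{e-1}{\sqrt{2}\,\sqrt{1+e^2}}>\frac{1}{\sqrt{1+e^2}}
\quad\Longleftrightarrow\quad e-1>\sqrt{2},
\]
which holds. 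Hence $\lambda\bigl(W_0(r),e\bigr)<\lambda(e,1)=\lambda\bigl(W_0(0),\partial\overline{\mathbb D}^c\bigr)$ for every $r\in[0,1)$, and your candidate $f_3$ satisfies \eqref{eqDc} on all of $G$. More generally, for any fixed base value $a>1$ one has $\lambda(a^2,a)/\lambda(a,1)=a\sqrt{2}/\sqrt{1+a^4}<1$, so equality at $r=1/3$ is only \emph{approached} as $a\to 1$; no single function witnesses failure.

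The paper resolves this by taking a \emph{family} of extremals with moving base point: it uses $f_3(x)=W\bigl(c\psi_v(x)/\|v\|\bigr)$ with $W$ as in Lemma~\ref{lemma2.7}, computes $\sum_m|D^mf_3(0)(x^m)/m!|=|f(0)|^{(1+cr_0)/(1-cr_0)}$, and then shows
\[
\frac{\lambda\bigl(|f(0)|,\,|f(0)|^{(1+cr_0)/(1-cr_0)}\bigr)}{\lambda\bigl(|f(0)|,1\bigr)}\longrightarrow \frac{2cr_0}{1-cr_0}>1
\quad\text{as }|f(0)|\to 1,
\]
since $cr_0>1/3$. You need this asymptotic step; a fixed $|f(0)|$ will not do.
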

	\begin{proof}
		Let $f\in H(G,\overline{\mathbb{D}}^c)$.
		Then we can construct a function $F$ as in the proof of Theorem \ref{ThmH} which will be in $H(\mathbb{D},\overline{\mathbb{D}}^c)$ and have the series expansion \eqref{eqHF}. Now, by using Lemma F,
we have $F\prec W$.		It is easy to observe that, $W(z)$ defined in Lemma F 
can be written as
		$$
		W(z)=F(0)\exp \bigg(\frac{z\log |F(0)|^2}{1-z}\bigg).
		$$
		Hence, from \cite[Equation 2.7]{AbuAli}, we obtain
		$$
		\sum_{m=0}^{\infty}\bigg|\cfrac{1}{m!}\,D^m f(0)\big(x^m\big)\bigg||z|^m \leq |F(0)|^\frac{1+|z|}{1-|z|}=|f(0)|^\frac{1+|z|}{1-|z|},
		$$
		which, in particular, gives
		$$
		\sum_{m=0}^{\infty}\bigg|\cfrac{1}{m!}\,D^m f(0)\big(x^m\big)\bigg||z|^m \leq |f(0)|^2 \quad \mbox{for } |z|\leq 1/3.
		$$
		Then a simple computation shows that
		$$
		\lambda \bigg(\sum_{m=0}^{\infty}\bigg|\cfrac{1}{m!}\,D^m f(0)\big(x^m\big)\bigg|,|f(0)|\bigg)\leq \lambda(|f(0)|,|f(0)|^2)\leq \lambda(|f(0)|,1)
		$$
		holds for $x\in (1/3)G$.
		
		
		Finally, we prove that inequality \eqref{eqDc} does not hold for $x\in r_0G$, where $r_0\in (1/3,1)$. We know that there exists a $c\in (0,1)$ and $v\in \partial G$ such that $cr_0>1/3$ and
		$$
		c\sup \{||x||:x\in \partial G\}<||v||.
		$$
		Now, we consider $f_3$ on $G$ defined by
		$$
		f_3(x)=W\bigg(\cfrac{c\psi_v(x)}{||v||}\bigg),
		$$
		where $\psi_v$ is a bounded linear functional on $X$ with $\psi_v(v)=||v||$ and $||\psi_v||=1$.
		Thus,
		$$
		f_3(r_0v)=W(cr_0)=|f(0)|\exp \bigg(\frac{cr_0}{1-cr_0}\log |f(0)|^2\bigg)=|f(0)|^{(1+cr_0)/(1-cr_0)}.
		$$
		Also, a simple computation gives that
		$$
		\frac{\lambda(|f(0)|,|f(0)|^{(1+cr_0)/(1-cr_0)})}{\lambda(|f(0)|,1)}= \frac{\sqrt{2}|f(0)|(|f(0)|^{(2cr_0)/(1-cr_0)}-1)}{(|f(0)|-1)\sqrt{1+|f(0)|^{2(1+cr_0)/(1-cr_0)}}}\to \frac{2cr_0}{1-cr_0}
		$$
		as $|f(0)|\to 1$. Since $2x/(1-x)>1$, i.e., $x>1/3$, we see that
		$$
		\lambda \bigg(\sum_{m=0}^{\infty}\bigg|\cfrac{1}{m!}\,D^m f_0(0)\big(x^m\big)\bigg||z|^m,|f(0)|\bigg)= \lambda(|f(0)|,|f(0)|^2)> \lambda(|f(0)|,1)
		$$
		as $|f(0)|\to 1$. This concludes the proof of the theorem.
	\end{proof}

	\noindent
	\subsection*{ Acknowledgement.}
	The work of the first author is
	supported by SERB-SRG, SRG/2023/ 001938, and the work of the second author is supported by the Institute Post Doctoral
	Fellowship of IIT Madras, India. He thanks IIT Madras, for providing an excellent research facility.
	
	\subsection*{Conflict of Interests}
	The authors declare that there is no conflict of interest regarding the publication of this paper.
	
	\subsection*{Data Availability Statement}
	The authors declare that this research is purely theoretical and does not associate with any datas.

\end{document}